\theoremstyle{plain}
\newtheorem{theorem}{Theorem}[section]
\newtheorem{lemma}[theorem]{Lemma}
\newtheorem{corollary}[theorem]{Corollary}
\newtheorem{definition}[theorem]{Definition}
\theoremstyle{remark}
\newtheorem{remark}[theorem]{Remark}
\begin{document}

\title[Unstabilized critical Heegaard surfaces]
{Examples of unstabilized critical Heegaard surfaces}

\author{Jung Hoon Lee}
\address{School of Mathematics, KIAS\\
207-43, Cheongnyangni 2-dong, Dongdaemun-gu\\
Seoul, Korea}
\email{jhlee@kias.re.kr}


\subjclass[2000]{Primary 57M50} \keywords{critical Heegaard surface, (closed orientable surface)$\times S^1$}

\begin{abstract}
We show that the standard minimal genus Heegaard splitting of (closed orientable surface)$\times S^1$ is a critical Heegaard splitting.
\end{abstract}

\maketitle

\section{Introduction}

Let $S$ be a closed orientable connected separating surface in an irreducible $3$-manifold $M$, dividing $M$ into two manifolds $V$ and $W$. Define the {\it disk complex} $\mathcal{D}(S)$ as follows.
\begin{enumerate}
\item Vertices of $\mathcal{D}(S)$ are isotopy classes of essential simple closed curves in $S$ that bound disks in $V$ or $W$.
\item $k+1$ distinct vertices constitute a $k$-cell if there are disjoint representatives.
\end{enumerate}

By abuse of terminology, we sometimes identify a vertex with some representative essential disk of the vertex.
Let $\mathcal{D}_V(S)$ and $\mathcal{D}_W(S)$ be the subcomplex of $\mathcal{D}(S)$ spanned by vertices that bound disks in $V$ and $W$ respectively.
In this paper, we only consider vertices and edges of $\mathcal{D}_V(S)$, $\mathcal{D}_W(S)$ and $\mathcal{D}(S)$.

$S$ is {\it strongly irreducible} if $\mathcal{D}_V(S)$ is not connected to $\mathcal{D}_W(S)$ in $\mathcal{D}(S)$. Strongly irreducible surfaces were proved to be useful to analyze the Heegaard structure and topology of $3$-manifolds. For example, if a minimal genus Heegaard surface is not strongly irreducible, then the manifold contains an incompressible surface \cite{Casson-Gordon}.

Incompressible surface and strongly irreducible surface can be regarded as a topological analogue of an index $0$ minimal surface and index $1$ minimal surface  respectively \cite{Bachman3}. As a generalization of this idea, Bachman has defined a notion of {\it critical surface} \cite{Bachman1}, \cite{Bachman2}, which can be regarded as a topological index $2$ minimal surface.

\begin{definition}
$S$ is critical if vertices of $\mathcal{D}(S)$ can be partitioned into two sets $C_0$ and $C_1$.

\begin{enumerate}
\item For each $i=0,1$, there is at least one pair of disks $V_i\in \mathcal{D}_V(S)\cap C_i$ and $W_i\in \mathcal{D}_W(S)\cap C_i$ such that $V_i\cap W_i=\emptyset$.
\item If $V_i\in \mathcal{D}_V(S)\cap C_i$ and $W_{1-i}\in \mathcal{D}_W(S)\cap C_{1-i}$, then $V_i\cap W_{1-i}\ne \emptyset$ for any representatives. In other words, $V_i$ and $W_{1-i}$ are not joined by an edge.
\end{enumerate}
\end{definition}

Critical surfaces have some useful properties. For example, if an irreducible manifold contains an incompressible surface and a critical surface, then the two surfaces can be isotoped so that any intersection loop is essential on both surfaces.

In \cite{Bachman1}, it is shown that if a manifold which does not contain incompressible surfaces has two distinct Heegaard splittings, then the minimal genus common stabilization of the two splittings is critical.
In this paper, we give concrete examples of critical Heegaard surfaces which are of minimal genus Heegaard splittings, hence unstabilized.

\begin{theorem}
The standard minimal genus Heegaard splitting of (closed orientable surface)$\times S^1$ is a critical Heegaard splitting.
\end{theorem}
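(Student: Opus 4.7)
The plan is to exhibit four specific compressing disks $V_0, V_1 \in \mathcal{D}_V(H)$ and $W_0, W_1 \in \mathcal{D}_W(H)$ realizing the two weak-reducing pairs required by the definition, verify their intersection pattern on $H$, and then argue that the induced partition extends to all of $\mathcal{D}(H)$.

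First I would recall the standard construction. Writing $F \times S^1 = F \times [0,1]/\sim$, fix distinct points $p_1, p_2 \in F$ with disjoint small disk neighborhoods $D_1, D_2$. The Heegaard surface $H$ is built from the two horizontal twice-punctured copies $(F \setminus (D_1 \cup D_2)) \times \{1/4, 3/4\}$ by attaching two vertical tubes $T_1 = \partial D_1 \times [1/4, 3/4]$ and $T_2 = \partial D_2 \times [3/4, 5/4]$, the second tube running through $s = 0 \sim 1$. An Euler characteristic calculation gives $H$ of genus $2g+1$, and after naming the sides so that $V = (D_1 \times [1/4, 3/4]) \cup ((F \setminus D_2) \times [3/4, 5/4])$, both $V$ and $W$ are genus $2g+1$ handlebodies. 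The four candidate disks are
\[
V_0 = D_1 \times \{1/2\}, \quad W_0 = D_2 \times \{1\}, \quad V_1 = \alpha \times [3/4, 5/4], \quad W_1 = \alpha' \times [1/4, 3/4],
\]
where $\alpha \subset F \setminus (D_1 \cup D_2)$ with $\partial \alpha \subset \partial D_2$ is dual to an edge of a 1-vertex spine of $F \setminus D_2$, and $\alpha'$ is the analogous dual arc for $F \setminus D_1$ with $\partial \alpha' \subset \partial D_1$, chosen so that $\alpha \cap \alpha' = \emptyset$ in the twice-punctured surface.

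The intersection conditions are then checked directly on $H$. Disjointness of $V_0, W_0$ and of $V_1, W_1$ is immediate (different tubes/levels for the former; for the latter the two slabs only meet at $s = 3/4$, where $\alpha \cap \alpha' = \emptyset$). For $V_0 \cap W_1 \neq \emptyset$ I would note that $\partial W_1$ contains two vertical arcs $\partial \alpha' \times [1/4, 3/4]$ on $T_1$ that transversely cross the meridian circle $\partial V_0 = \partial D_1 \times \{1/2\}$; although the algebraic intersection vanishes, a bigon search on $H$ fails because any such bigon must close through a region of $F \setminus (D_1 \cup D_2)$ bounded by $\alpha'$ together with an arc of $\partial D_1$, and essentiality of $\alpha'$ (dual to a spine edge, hence not boundary-parallel) rules this out. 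So the two curves are in minimal position with geometric intersection $2$, giving $V_0 \cap W_1 \neq \emptyset$ in $\mathcal{D}(H)$. The case $V_1 \cap W_0 \neq \emptyset$ is symmetric.

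The main obstacle is extending this to a legitimate partition of every vertex of $\mathcal{D}(H)$. I would define $C_0, C_1$ by putting into $C_i$ the entire connected component of the disjointness bipartite graph on $\mathcal{D}_V(H) \cup \mathcal{D}_W(H)$ containing $\{V_i, W_i\}$, and assigning remaining components to either $C_0$ or $C_1$ freely. Condition (2) becomes automatic, and condition (1) reduces to the assertion that $\{V_0, W_0\}$ and $\{V_1, W_1\}$ are in distinct components, i.e., no chain of alternating $V$- and $W$-disks with consecutive pairs disjoint connects the two weak-reducing pairs. My strategy would be to put an arbitrary compressing disk $D$ of $H$ into standard position with respect to the horizontal level surfaces $F \times \{1/4\}, F \times \{3/4\}$ via repeated disk surgery, and then to read off, from the combinatorics of the intersection arcs with these levels together with how $\partial D$ meets the tubes $T_1, T_2$, a well-defined invariant placing $D$ in exactly one of the two components. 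Carrying out this surgery-and-bookkeeping argument for every compressing disk is where I expect the bulk of the work to lie.
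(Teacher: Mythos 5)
Your setup is sound, and your reduction of criticality to a connectivity statement is a legitimate (and in fact clean) way to organize the argument: labelling each component of the bipartite disjointness graph by $C_0$ or $C_1$ makes condition (2) automatic, and everything then hinges on showing that the two weak reducing pairs lie in distinct components. But that is exactly where your proof stops. The ``surgery-and-bookkeeping'' step is not an argument: you never say what the invariant is, and, more importantly, an invariant that is well defined on isotopy classes of single disks is not enough. To separate components of the bipartite graph you must show your label is constant along \emph{edges}, i.e.\ that whenever a $V$-disk and a $W$-disk are disjoint they receive the same label; nothing in your outline addresses how disjointness of two disks on opposite sides of $H$ constrains their positions relative to the level surfaces. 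As written, conditions (1) and (2) are verified only for the four named disks, which is far from what the definition requires.

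The missing ingredient is a classification statement. In the paper's notation (your $V_0,W_0$ are its $D,E$, the cocores of the two tubes), the key fact is Lemma 3.1: \emph{the only essential disk in $V$ disjoint from $E$ is $D$ itself}, and symmetrically the only essential disk in $W$ disjoint from $D$ is $E$. The proof cuts $V$ along $D$ to obtain (once-punctured surface)$\times I$, in which $\partial E$ bounds off the vertical annulus of the product; since a product $T\times I$ contains no essential disk whose boundary misses $\partial T\times I$, an outermost-arc argument on the intersection with $D$ shows that any $V$-disk disjoint from $E$ can be isotoped off $D$ and is therefore isotopic to $D$. With this lemma the component of $\{V_0,W_0\}$ in your bipartite graph is literally the single edge $\{D,E\}$, so it is distinct from the component of $\{V_1,W_1\}$ and your partition works; equivalently, the paper takes $C_0\cap\mathcal{D}_V(H)=\{D\}$, $C_0\cap\mathcal{D}_W(H)=\{E\}$ and puts everything else in $C_1$, whereupon condition (2) is immediate because every $C_1$ disk in $W$ meets $D$ by definition. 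Without this lemma, or something equivalent, your proof does not close.
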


\begin{remark}
In \cite{Bachman-Johnson}, for any $n$, Bachman and Johnson constructed examples of manifolds $M_n$ admitting an index $n$ topologically minimal genus $n+1$ Heegaard surface by $n$-fold covering of certain $2$-bridge link exteriors. When $n=2$, the manifold $M_2$ seems to be a genus three manifold by appealing to the Kobayashi's characterization of genus two manifolds containing incompressible tori \cite{Kobayashi}. So existence of minimal genus critical Heegaard surfaces of genus three are already known. Theorem 1.2 gives some examples of higher genus minimal genus critical Heegaard surfaces.
\end{remark}

\begin{corollary}
For any odd $g>1$, there exists a minimal genus critical Heegaard suface of genus $g$.
\end{corollary}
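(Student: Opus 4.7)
The plan is to combine Theorem 1.2 with the known computation of the Heegaard genus of $\Sigma_h \times S^1$. For each integer $h \geq 1$, let $\Sigma_h$ denote the closed orientable surface of genus $h$. The standard Heegaard splitting of $\Sigma_h \times S^1$ has genus $2h+1$: one obtains it, for instance, by thickening a spine of $\Sigma_h$ together with one factor-$S^1$ arc, then removing a neighborhood of a corresponding dual graph on the other side. By Schultens' calculation of the Heegaard genus of surface bundles over $S^1$, this splitting is of minimal genus, so the hypothesis of Theorem 1.2 applies.

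Given any odd integer $g > 1$, I would set $h = (g-1)/2 \geq 1$, so that $2h+1 = g$. Theorem 1.2 then guarantees that the standard minimal genus Heegaard surface of $\Sigma_h \times S^1$ is critical, and by the above genus computation this Heegaard surface has genus exactly $g$. As $h$ runs through $\{1,2,3,\ldots\}$, the corresponding values $g = 2h+1$ exhaust all odd integers greater than $1$, which is precisely the statement of the corollary.

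The only nontrivial ingredient beyond Theorem 1.2 is the genus identification; the upper bound $g(\Sigma_h \times S^1) \leq 2h+1$ is immediate from the explicit construction of the standard splitting, and the matching lower bound is the substantive (but well-known) input, which I would simply cite rather than reprove. In particular, there is no new topology to produce: the corollary is a direct enumeration consequence, so the main ``obstacle'' is merely to quote the correct genus formula and to observe that $\{2h+1 : h \in \mathbb{Z}_{\geq 1}\}$ coincides with $\{g : g \text{ odd}, g>1\}$.
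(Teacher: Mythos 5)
Your proposal is correct and matches the paper's (implicit) argument: the corollary is meant to follow immediately from Theorem 1.2 once one notes that the standard splitting of $\Sigma_h\times S^1$ has genus $2h+1$ and that these values exhaust the odd integers greater than $1$. The only stylistic difference is the source of the lower bound for the Heegaard genus: citing Schultens works, though one can also get it more cheaply from $H_1(\Sigma_h\times S^1)\cong\mathbb{Z}^{2h+1}$, which forces the rank of $\pi_1$, and hence the Heegaard genus, to be at least $2h+1$.
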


In Section $2$, we give a partition of a disk complex for the standard genus three Heegaard surface of (torus)$\times S^1$. In Section $3$, we show that the partition satisfies the definition of a critical surface.
The same arguments apply for (closed orientable surface)$\times S^1$.

\section{Partition of disk complex}

Let $C$ be a cube $\{(x,y,z)\in \mathbb{R}^3\,|\,-1\le x,y,z\le 1\,\}$. If we identify the three pairs of opposite faces of $C$, we get a $3$-torus $M=$ (torus)$\times S^1$. Let $q:C\rightarrow M$ be the quotient map. An image by $q$ of a tubular neighborhood of union of three axis in $C$ is a genus three handlebody $V$. It is easy to see that $W=cl(M-V)$ is also a genus three handlebody. This decomposition $M=V\cup_S W$ is a standard genus three Heegaard splitting of (torus)$\times S^1$.

\begin{figure}[h]
   \centerline{\includegraphics[width=11cm]{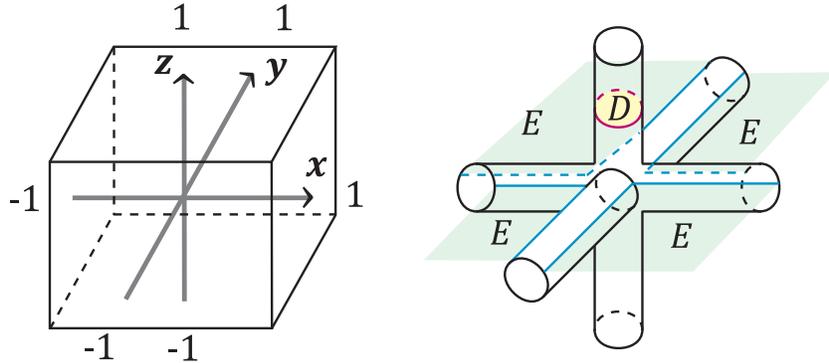}}
    \caption{$D,E\in C_0$ and $D\cap E=\emptyset$}
\end{figure}

Let $D$ be a meridian disk $q(\{z=\frac{1}{2}\}\cap C)\cap V$ in $V$. Let $E$ be a meridian disk $q(\{z=0\}
\cap C)\cap W$ in $W$. See Figure 1. Now we define a partition of vertices of the disk complex $\mathcal{D}(S)=C_0\,\dot{\cup}\, C_1$ as follows.

\begin{enumerate}
\item Let $\mathcal{D}_V(S)\cap C_0$ be essential disks in $V$ that are disjoint from $E$.
\item Let $\mathcal{D}_W(S)\cap C_0$ be essential disks in $W$ that are disjoint from $D$.
\item Let $\mathcal{D}_V(S)\cap C_1$ be $\mathcal{D}_V(S)-(\mathcal{D}_V(S)\cap C_0)$.
\item Let $\mathcal{D}_W(S)\cap C_1$ be $\mathcal{D}_W(S)-(\mathcal{D}_W(S)\cap C_0)$.
\end{enumerate}

Since $V\cup_S W$ is not a reducible Heegaard splitting, the four sets $\mathcal{D}_V(S)\cap C_0$, $\mathcal{D}_W(S)\cap C_0$, $\mathcal{D}_V(S)\cap C_1$, $\mathcal{D}_W(S)\cap C_1$ are mutually disjoint.

Note that $D\in \mathcal{D}_V(S)\cap C_0$ and $E\in \mathcal{D}_W(S)\cap C_0$ and $D\cap E=\emptyset$.
A meridian disk $D'=q(\{x=\frac{1}{2}\}\cap C)\cap V$ is in $\mathcal{D}_V(S)\cap C_1$. A meridian disk $E'=q(\{x=0\}\cap C)\cap W$ is in $\mathcal{D}_W(S)\cap C_1$ and $D'\cap E'=\emptyset$. See Figure 2. So the partition satisfies the first condition of definition of a critical surface.

Since $\mathcal{D}_V(S)\cap C_0$ and $\mathcal{D}_W(S)\cap C_0$ are symmetric, we only need to show that vertices in $\mathcal{D}_V(S)\cap C_0$ and $\mathcal{D}_W(S)\cap C_1$ cannot be joined by an edge.
By definition, any representative of a disk in $\mathcal{D}_W(S)\cap C_1$ intersects $D$. In next section, we will show that $\mathcal{D}_V(S)\cap C_0$ consists of a single vertex $D$, then it completes the proof for the case of (torus)$\times S^1$.

\begin{figure}[h]
   \centerline{\includegraphics[width=5cm]{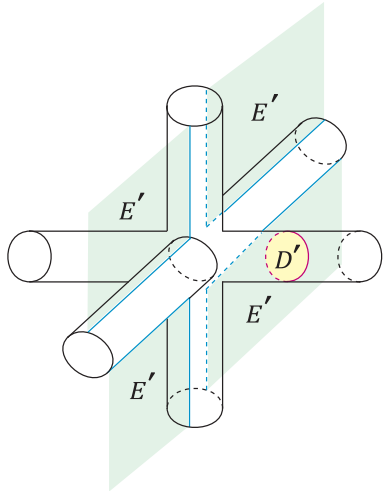}}
    \caption{$D',E'\in C_1$ and $D'\cap E'=\emptyset$}
\end{figure}

\section{Any essential disk in $V$ disjoint from $E$ is $D$}

\begin{lemma}
Any essential disk in $V$ disjoint from $E$ is isotopic to $D$.
\end{lemma}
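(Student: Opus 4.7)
The plan is to prove the lemma by cutting $V$ along two well-chosen surfaces and reducing the question to one about a one-holed torus. Let $N := V\cap q(\{z=0\}\cap C)$; this is a properly embedded one-holed torus in $V$ with $\partial N=\partial E$, whose spine is the figure-eight formed by the $x$- and $y$-axis circles. The inclusion $N\hookrightarrow V$ sends the $x$- and $y$-loops of $\pi_1(N)$ to the corresponding generators of $\pi_1(V)=\langle x,y,z\rangle$, so the induced map on $\pi_1$ is injective and $N$ is incompressible in $V$. Given an essential disk $D'\subset V$ with $\partial D'\cap\partial E=\emptyset$, isotope $D'$ to minimize $|D'\cap N|$; since $\partial D'\cap\partial N=\emptyset$ the intersection is a disjoint union of circles, and a standard innermost-disk argument using incompressibility of $N$ removes all of them, so we may assume $D'\cap N=\emptyset$.

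Cut $M$ along the torus $q(\{z=0\}\cap C)$ to get $M':=T^2\times I$, and correspondingly cut $V$ along $N$ to get a genus-$4$ handlebody $V'\subset M'$. Its spine is a graph with two vertices $v_0,v_1$ lying on the two boundary tori of $M'$, joined by an interior edge $e_z$ (the cut $z$-axis) and carrying loops $x_i,y_i$ at each $v_i$ on the respective boundary torus. The boundary decomposes as $\partial V'=N_1\cup N_2\cup F$, where $N_i\cong N$ lie on $\partial M'$ and $F\cong\Sigma_{2,2}$ lies in the interior of $M'$; the original disk $D$ is the co-core of $e_z$, and $\partial D'\subset F$.

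Next, isotope $D'$ to minimize $|D'\cap D|$: innermost-disk arguments remove circles of intersection, and an outermost-arc boundary compression of $D'$ using a subdisk of $D$ reduces the arc count, preserving $\partial D'\subset F$ because the compressing arc lies on $\partial D\subset F$. If neither of the two replacement disks produced by such a compression were essential, then $\partial D'$ itself would bound in $\partial V'$, contradicting essentiality of $D'$; hence we may iterate until $D'\cap D=\emptyset$. Then $D'$ lies in one component of $V'$ cut along $D$; by the symmetry between $v_0$ and $v_1$, assume $D'\subset V'_0$, a genus-$2$ handlebody with spine $x_0\vee y_0\subset\partial M'$ and $\partial V'_0=N_1\cup F_0\cup D_z^0$ with $F_0\cong\Sigma_{1,2}$ and $D_z^0$ a copy of $D$.

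Finally, $F_0\cup D_z^0$ is a copy of the one-holed torus $N_1$ (capping one of the two boundary circles of $\Sigma_{1,2}$ by a disk yields $\Sigma_{1,1}$), and since $V'_0\cong N_1\times I$ with $N_1$ and $F_0\cup D_z^0$ corresponding to the two $N_1$-boundary components, the inclusion $F_0\cup D_z^0\hookrightarrow V'_0$ is a $\pi_1$-isomorphism. Thus any simple closed curve $c\subset F_0$ null-homotopic in $V'_0$ is null-homotopic in $F_0\cup D_z^0$, hence bounds a disk $\Delta$ there; $\Delta$ either avoids $D_z^0$ (so $c$ is inessential in $F_0$) or contains $D_z^0$ (so $\Delta\setminus D_z^0$ is an annulus showing $c$ is isotopic to $\partial D_z^0=\partial D$). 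Since $D'$ is essential, the first case is excluded, so $\partial D'$ is isotopic to $\partial D$ and $D'$ is isotopic to $D$. The step I expect to require the most care is the outermost-arc argument in the third paragraph, for which the essentiality of the replacement disk is ensured by the argument just sketched.
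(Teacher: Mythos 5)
Your setup (cutting $V$ along the incompressible once-punctured torus $N$ to land in $T^2\times I$, then along $D$, and the final identification $V'_0\cong N_1\times I$ with the $\pi_1$-isomorphism onto the free face) is correct and is a genuinely different decomposition from the paper's, which cuts along $D$ first and identifies $V\setminus D$ with (once-punctured torus)$\times I$. Your closing dichotomy --- the disk bounded by $c$ in $F_0\cup D_z^0$ either misses $D_z^0$ or contains it, forcing $c$ inessential or parallel to $\partial D$ --- is a clean and fully justified version of the paper's last step.

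However, there is a genuine gap in the third paragraph. A boundary compression of $D'$ along an outermost subdisk $\Delta\subset D$ is a surgery, not an isotopy: it replaces $D'$ by one of the two disks $D'_i\cup\Delta$, which in general lies in a different isotopy class. Your observation that at least one replacement disk is essential shows only that iterating produces \emph{some} essential disk with boundary in $F$ disjoint from $D$; the final $\pi_1$ argument then proves that this surgered descendant is isotopic to $D$. It does not prove that the original $D'$ is, and the lemma quantifies over \emph{every} essential disk disjoint from $E$. (Equivalently: producing an essential disk with fewer intersections does not contradict minimality of $|D'\cap D|$ over the isotopy class of $D'$, since the new disk need not be isotopic to $D'$.) The fix stays inside your framework: take an arc of $D'\cap D$ outermost in $D'$, cutting off $\Delta\subset D'$ with $\mathrm{int}\,\Delta\cap D=\emptyset$, so $\Delta$ lies in $V'_0$ (say); cap it with a piece of $D$ to get a properly embedded disk in $V'_0$ with boundary in $F_0\cup D_z^0$. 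The same $\pi_1$-isomorphism $\pi_1(F_0\cup D_z^0)\to\pi_1(V'_0)$ you use at the end shows this disk is inessential, so its boundary bounds a disk in $\partial V'_0$ and $\Delta$ can be isotoped across it to reduce $|D'\cap D|$ by an honest isotopy of $D'$. This is exactly the role played in the paper's proof by the claim that $\Delta\cup\Delta'$ is inessential in $T_1\times I$.
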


\begin{proof}
Let $F$ be an essential disk in $V$ disjoint from $\partial E$ which intersects $D$ minimally. Since $V$ is irreducible, we may assume that $F\cap D$ has no circle component of intersection, so the intersection consists of arcs. Let $\gamma$ be an outermost arc of $F\cap D$ in $F$ and $\Delta$ be the corresponding outermost disk in $F$. Let $\Delta'$ be one of the two disks cut by $\gamma$ in $D$.

By slightly pushing $\Delta'$, $\Delta\cup\Delta'$ is disjoint from $D$ and $\partial E$. If we cut $V$ by $D$, we get a genus two handlebody $V'$. $\partial E$ separates $\partial V'$ into two once-punctured tori $T_1$ and $T_2$. We can see that $V'$ is homeomorphic to $T_1\times I$. Since there can be no essential disk in $T_1\times I$ whose boundary is disjoint from $\partial T_1$, $\Delta\cup\Delta'$ is an inessential disk in $V'$. The disk that $\partial (\Delta\cup\Delta')$ bounds in $\partial V'$ may or may not contain a copy of $D$, but in any case we can reduce the intersection of $F\cap D$ by isotopy of $\Delta$.

So we conclude that $F\cap D=\emptyset$. By the above arguments and since $F$ is essential in $V$, $F$ is isotopic to $D$.
\end{proof}

A closed orientable surface $S$ of genus $g>1$ can be obtained from a $4g$-gon $P=a_1 b_1 a^{-1}_1 b^{-1}_1\cdots a_g b_g a^{-1}_g b^{-1}_g$ by identifying corresponding sides. Then $M=S\times S^1$ can be obtained from $C=P\times [-1,1]$ by identifying corresponding pairs of faces. Let $q:C\rightarrow M$ be the quotient map. A standard minimal genus Heegaard splitting $M=V\cup W$ can be obtained similarly as in the case of (torus)$\times S^1$ in Section $2$. Take the center point $p$ of $P\times\{0\}$. Connect $p$ to each side of $P\times\{0\}$, and also connect $p$ to the center of $P\times\{-1\}$ and $P\times\{1\}$ by vertical arcs. Let $V$ be an image by $q$ of a tubular neighborhood of the resulting graph. Let $W=cl(M-V)$.

Let $D$ be a meridian disk $q(P\times \{\frac{1}{2}\})\cap V$ in $V$. Let $E$ be a meridian disk $q(P\times \{0\})\cap W$ in $W$.

$V$ can be regarded as a ((once-punctured surface) $\times I$) $\cup$ ($1$-handle), where $E$ corresponds to the puncture and $D$ corresponds to the co-core of $1$-handle. By similar arguments as in the proof of Lemma 3.1, any essential disk in $V$ disjoint from $E$ is isotopic to $D$. Note that $W$ can also be regarded as a ((once-punctured surface) $\times I$) $\cup$ ($1$-handle), where $D$ corresponds to the puncture and $E$ corresponds to the co-core of $1$-handle. So any essential disk in $W$ disjoint from $D$ is isotopic to $E$.

Hence if we partition the disk complex of $\partial V=\partial W$ as in Section 2, then we can see that the Heegaard surface of (closed orientable surface)$\times S^1$ is a critical Heegaard surface. This completes the proof of Theorem 1.2.

\end{document}